\newtheorem{theorem}{Theorem}
\newtheorem{lemma}{Lemma}
\theoremstyle{definition}
\newcommand{\beql}[1]{\begin{equation}\label{#1}}
\newcommand{\eeq}{\end{equation}}
\newcommand{\comment}[1]{}
\newcommand{\Abs}[1]{{\left|{#1}\right|}}
\newcommand{\Set}[1]{{\left\{{#1}\right\}}}
\newcommand{\RR}{{\mathbb R}}
\newcommand{\CC}{{\mathbb C}}
\newcommand{\ZZ}{{\mathbb Z}}
\newcommand{\QQ}{{\mathbb Q}}
\newcommand{\supp}{{\rm supp\,}}
\newcommand{\ft}[1]{\widehat{#1}}
\newcounter{rem}
\newcounter{step}
\newcounter{mysec}
\newcounter{mysubsec}[mysec]
\newcounter{othm}
\def\theothm{\Alph{othm}}
\begin{document}

\title{Fourier pairs of discrete support with little structure}

\author[M. Kolountzakis]{{Mihail N. Kolountzakis}}
\address{M.K.: Department of Mathematics and Applied Mathematics, University of Crete, Voutes Campus, GR-700 13, Heraklion, Crete, Greece}
\email{kolount@uoc.gr}
\thanks{This work has been partially supported by the ``Aristeia II'' action (Project
FOURIERDIG) of the operational program Education and Lifelong Learning
and is co-funded by the European Social Fund and Greek national resources.}

\begin{abstract}
We give a simple proof of the fact, first proved in a stronger form in \cite{lev2014quasicrystals}, that there exist measures on the real line of discrete support, whose Fourier Transform is also a measure of discrete support, yet this Fourier pair cannot be constructed by repeatedly applying the Poisson Summation Formula finitely many times. More specifically the support of both the measure and its Fourier Tranform are not contained in a finite union of arithmetic progressions.
\end{abstract}

\maketitle

\noindent
{\bf AMS 2010 Mathematics Subject Classification:} 42C99, 52C23

\noindent
{\bf Keywords:} Distributions. Poisson Summation Formula. Quasicrystals.

\section{Introduction}

The Poisson Summation Formula (PSF) in dimension 1
\beql{psf}
\sum_{n \in \ZZ} \ft{f}(n) = \sum_{n \in \ZZ} f(n),
\eeq
where $\ft{f}(\xi) = \int_\RR e^{-2\pi i \xi x} f(x)\,dx$ is the Fourier Transform of $f \in {\mathcal S}(\RR)$ (the Schwarz space),
is a fundamental result. It can be rephrased in the language of distributions in the form
\beql{psfd}
\left( \sum_{n \in \ZZ} \delta_n \right)^\wedge = \sum_{n \in \ZZ} \delta_n,
\eeq
where $\delta_a$ denotes the unit point mass at $a$ and the Fourier Transform is meant in
the sense of tempered distributions.

We note that \eqref{psfd} shows us a pair of measures which are (a) the Fourier Transform of one another and (b) they are both of discrete support in $\RR$. In this base case we have that the measure $\mu = \sum_{n \in \ZZ} \delta_n$ is the Fourier Trasnform of itself, but if one applies the standard properties of translation, modulation and dilation with respect to the Fourier Trasnform, one can get, by repeatedly applying the PSF, a multitude of measures of discrete support whose Fourier Trasnform is also a measure of discrete support. A basic question is then whether all such Fourier Transform pairs of discrete measures are due to the PSF. There has been a long series of results over the last 50 years in which it is proved, under varying assumptions, that this is indeed the case  (see the references and discussion in \cite{lev2014quasicrystals}). In the latest such result \cite{lev2013measures,lev2013quasicrystals}, one of the strongest (along perhaps with that in \cite{meyer1970nombres}) and with very weak assumptions, it is proved that if
\beql{ftpair}
\mu = \sum_{s\in S} c_s \delta_s,\ \ \ 
\ft{\mu} = \sum_{t \in T} d_t \delta_t,\ \ \ (c_s, d_t \neq 0)
\eeq
is a Fourier Transform pair of measures, and the sets $S$ and $T$ are both {\em uniformly discrete} (a set is called \textit{uniformly discrete} if the distance between any two of its points is bounded below by a positive constant), then both $T$ and $S$ are periodic sets and, furthermore, this Fourier pair can be derived by a finite number of applications of the PSF.

In the opposite direction it was recently proved \cite{lev2014quasicrystals} that there are Fourier pairs of the form \eqref{ftpair}
with $S, T \subseteq \RR$ being discrete (but not uniformly discrete) sets,
such that $S$ does not contain an infinite part of {\em any} arithmetic progression
and both $S$ and $T$ cannot be covered by finitely many arithmetic progressions.
This strongly disproves that all Fourier pairs of the form \eqref{ftpair} can be constructed by finitely many repeated applications of the PSF \eqref{psfd}.

In this paper we give another construction of a Fourier pair \eqref{ftpair} which cannot arise from finitely many applications of the PSF. Our simple construction does not give sets which intersect any arithmetic progression in a finite set as the construction in \cite{lev2014quasicrystals} does. It compensates for this by being elementary and short.

\begin{theorem}\label{th:main}
There is a translation bounded measure $\nu$ of the form $\nu = \sum_{\lambda\in\Lambda} c_\lambda \delta_\lambda$, ($c_\lambda\neq 0$) such that $\Lambda \subseteq \RR$ is a discrete set and such that $\ft{\nu}$ is also a translation bounded measure of the form $\ft{\nu} = \sum_{s \in S} d_s \delta_s$, ($d_s \neq 0$) where $S$ is also a discrete set and such that both $\Lambda$ and $S$ are not contained in finite unions of arithmetic progressions. Therefore this Fourier pair cannot be derived by finitely many applications of the PSF.
\end{theorem}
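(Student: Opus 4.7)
The plan is to define $\nu$ as a countable weighted sum of elementary Poisson-pair building blocks and verify the required properties (translation-boundedness of $\nu$ and $\widehat{\nu}$, discreteness of both supports, and the non-AP property on each side) by direct computation. Each building block $\nu_k$ will be a dilated (and possibly translated/modulated) Dirac comb $\sigma_{\alpha_k}=\sum_{n\in\ZZ}\delta_{n\alpha_k}$, so that by the PSF its Fourier transform $\widehat{\sigma_{\alpha_k}}=\alpha_k^{-1}\sigma_{1/\alpha_k}$ is again a discrete measure. The total measure will be
$$\nu=\sum_{k=1}^\infty c_k\,\nu_k,$$
with weights $c_k$ and scales $\alpha_k$ — chosen rationally independent and tending to infinity, for instance $\alpha_k=\sqrt{p_k}$ where $p_k$ is the $k$-th prime — arranged so the series converges and neither side accumulates.

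Assuming such a construction has been made, the verification breaks into routine steps. For translation-boundedness of $\nu$, the total variation over any $[x,x+1]$ is bounded by $\sum_k|c_k|\cdot|\nu_k|([x,x+1])$; since $\alpha_k\geq 1$ each $\nu_k$ contributes at most one atom per unit interval, so $\sum|c_k|<\infty$ suffices, and a parallel estimate handles $\widehat{\nu}$. Discreteness of $\Lambda:=\supp(\nu)\subseteq\bigcup_k\alpha_k\ZZ$ follows immediately from $\alpha_k\to\infty$, since any compact interval meets only finitely many of the APs $\alpha_k\ZZ$. Nonvanishing of all coefficients follows from rational independence of the $\alpha_k$'s, since distinct $\nu_k$'s then place their atoms on disjoint sets. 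The non-AP property of $\Lambda$ is a short pigeonhole: if $\Lambda\subseteq\bigcup_{i=1}^N(a_i+b_i\ZZ)$, then infinitely many rationally independent scales $\alpha_k$ would have to share the same AP $a_i+b_i\ZZ$, forcing $\QQ$-relations among the $\alpha_k$'s, a contradiction; the same argument works for $S:=\supp(\widehat{\nu})$. The last sentence of the theorem is then immediate because any Fourier pair obtained from finitely many applications of the PSF has both supports contained in finite unions of APs by construction.

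The main obstacle, and the only genuinely delicate point, is establishing discreteness of $S$: dilation duality forces the atoms $1/\alpha_k$ of $\widehat{\sigma_{\alpha_k}}$ to accumulate at $0$ as $\alpha_k\to\infty$, which a priori breaks discreteness on the Fourier side. The resolution must be built into the choice of the $\nu_k$'s: either one replaces each $\sigma_{\alpha_k}$ by a small finite linear combination of dilated combs whose Fourier-side atoms in a fixed neighbourhood of the origin cancel exactly, or one modulates/translates each $\nu_k$ so that $\supp(\widehat{\nu_k})$ lies in a bounded region disjoint from all the others. Arranging these cancellations so that the Fourier-side atoms near $0$ all vanish, while keeping the spacings on both sides rich enough that neither $\Lambda$ nor $S$ can be covered by finitely many APs, is the technical heart of the proof; once it is done, the rest of the argument is routine bookkeeping.
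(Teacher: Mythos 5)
Your outline reproduces the paper's bookkeeping (summable weights for translation boundedness, increasing scales for discreteness on one side, $\QQ$-independence to rule out covering by finitely many arithmetic progressions), but it stops exactly at the decisive step and therefore has a genuine gap. You correctly observe that the Fourier transforms $\ft{\sigma_{\alpha_k}}=\alpha_k^{-1}\sigma_{1/\alpha_k}$ accumulate at $0$ (in fact they become dense everywhere, since each is a full coset of $\alpha_k^{-1}\ZZ$ with gap $1/\alpha_k\to 0$), and you state that the fix is ``the technical heart of the proof'' without supplying it. Of your two proposed remedies, the second is impossible: a translated/modulated Dirac comb on $\alpha_k\ZZ$ always has Fourier transform supported on a full translate of $\alpha_k^{-1}\ZZ$, which meets every unit interval in about $\alpha_k$ points, so its support can never be confined to a bounded region. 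The first remedy (finite linear combinations of combs with exact cancellation near the origin) is the right idea, but you give no argument that such cancellations exist, and that existence is the whole content of the construction.

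The paper fills this gap with an elementary linear-algebra lemma: for $N=100M^2$ there is a nonzero $f:\ZZ_N\to\CC$ with both $f$ and $\ft{f}$ vanishing on $\Set{\Abs{x}\le N/10}$, because requiring this imposes roughly $2\cdot N/5$ homogeneous linear conditions on roughly $4N/5$ free values, leaving a nontrivial solution. Periodizing $f$ over $\ZZ$ gives $\tau=\sum_{k=0}^{N-1}f(k)\,T_k\sigma_N$ --- precisely a finite linear combination of translated combs of a single scale --- whose Fourier transform is supported on $\frac{1}{N}\ZZ$ with coefficients $\ft{f}$; a dilation by $1/\sqrt{N}$ then balances the two gaps so that both the measure and its transform vanish on $(-M,M)$. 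One such block per $M_n\to\infty$, translated and modulated by $\QQ$-independent $\epsilon_n\to 0$ and suitably damped, completes the proof. Without an explicit mechanism of this kind (or a proof that your proposed cancellations can be arranged), your argument does not establish discreteness of $S=\supp\ft{\nu}$, which is the only nontrivial claim in the theorem.
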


\section{The Construction}

They key ingredient is the construction, given $M>0$, of a Fourier pair such as \eqref{ftpair} so that both $\mu$ and $\ft{\mu}$ vanish in the interval $(-M, M)$. The final measures then are costructed by overlaying such measures, for infinitely many values $M \to \infty$. The increasing gaps around the origin serve to keep the end result supported by a discrete set.

Let $\ZZ_N$ denote the cyclic group of $N$ elements.

\begin{lemma}\label{lm:zn}
There is a function $f:\ZZ_N\to\CC$, not identically zero, such that both the function and its Fourier transform $\ft{f}:\ZZ_N\to\CC$ vanish in the interval
$$
I=\Set{x\in\ZZ_N: \Abs{x} \le \frac{N}{10}}.
$$
\end{lemma}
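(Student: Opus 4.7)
The plan is a very short linear algebra dimension count — nothing about the finite Fourier transform will be used except that it is a linear automorphism of $\CC^{\ZZ_N}$.

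I work in the $N$-dimensional complex vector space $\CC^{\ZZ_N}$ and define two subspaces
\begin{align*}
V_1 &= \Set{f:\ZZ_N\to\CC : f(x)=0 \text{ for all } x\in I},\\
V_2 &= \Set{f:\ZZ_N\to\CC : \ft{f}(x)=0 \text{ for all } x\in I}.
\end{align*}
Trivially $\dim V_1 = N - |I|$, and since $f\mapsto\ft{f}$ is a bijection of $\CC^{\ZZ_N}$ also $\dim V_2 = N - |I|$. From the elementary inequality $\dim(V_1+V_2)\le N$ we obtain
$$
\dim(V_1\cap V_2) \;\ge\; \dim V_1 + \dim V_2 - N \;=\; N - 2|I|.
$$

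Now $|I| \le 2\Floor{N/10} + 1 \le N/5 + 1$, so $N - 2|I| \ge 3N/5 - 2$, which is strictly positive as soon as $N \ge 4$. Therefore $V_1 \cap V_2$ contains a nonzero element $f$, and this is precisely the function the lemma asks for.

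There is really no obstacle to overcome; the argument is tight only in the sense that one needs $|I| < N/2$ for the codimensions to add up to less than $N$, and the factor $1/10$ in the definition of $I$ leaves ample slack. The reason the lemma is stated with a forbidden zone as large as $\Set{|x|\le N/10}$ rather than something barely smaller than $N/2$ is presumably that the next step of the paper overlays scalings of such $f$'s for $M\to\infty$, and needs these gaps around the origin to be wide enough that the resulting supports, in both time and frequency, remain discrete.
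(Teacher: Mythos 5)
Your proof is correct and is essentially the paper's own argument: the paper counts unknowns (the values of $f$ off $I$, about $4N/5$ of them) against the homogeneous equations $\ft{f}(x)=0$ for $x\in I$ (about $N/5$ of them), which is exactly your codimension count $\dim(V_1\cap V_2)\ge N-2|I|>0$ phrased as a linear system. No further comment is needed.
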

\begin{proof}
We search for $f:\ZZ_N\to\CC$ which is 0 on $I$ such that $\ft{f}$ also vanishes on $I$. This is a homogeneous linear system (the unknowns are the values of $f$ off $I$) with more unknowns ($\sim 4N/5$ of them) than equations ($\sim N/5$ of them) so there is a non-zero solution.
\end{proof}

\begin{lemma}\label{lm:comb}
Suppose $M>1$ is an integer. Then there is a non-zero measure $\mu$ of the form
$$
\mu = \sum_{n \in \ZZ} a_n \delta_{An},
$$
where $A$ is a positive real number, whose Fourier Transform is a measure $\ft{\mu}$ of the form
$$
\ft{\mu} = \sum_{n \in \ZZ} b_n \delta_{Bn},
$$
where $B$ is a positive real number, and such that both $\mu$ and $\ft{\mu}$ are 0 in the interval $(-M, M)$.

Furthermore the measures $\mu$ and $\ft{\mu}$ can be taken to be periodic and the numbers $A$ and $B$ may be chosen to be rational.
\end{lemma}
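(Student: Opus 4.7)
My plan is to lift the function $f$ on $\ZZ_N$ from Lemma~\ref{lm:zn} to a periodic atomic measure on $\RR$ by spacing its values along the arithmetic progression $A\ZZ$, for a suitable scale $A$ and a suitable $N$ depending on $M$. The discrete Fourier transform of $f$ on $\ZZ_N$ will then govern the Fourier transform of the resulting measure on $\RR$, and the vanishing of $f$ and $\ft{f}$ on the central interval $I \subseteq \ZZ_N$ will translate into the vanishing of $\mu$ and $\ft{\mu}$ on $(-M,M)$ provided $N$ is chosen large enough relative to $M$.

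Concretely, pick integer $N \ge 100 M^2$ and let $A = 1/(10M)$ (a rational number). Apply Lemma~\ref{lm:zn} to get $f:\ZZ_N\to\CC$, not identically zero, with both $f$ and its DFT vanishing on $I=\{x\in\ZZ_N:|x|\le N/10\}$. Define
$$
\mu = \sum_{n\in\ZZ} f(n \bmod N)\,\delta_{An},
$$
which is periodic of period $P=AN=10M$. Writing $\mu = \sigma * \sum_k \delta_{kP}$ with $\sigma=\sum_{n=0}^{N-1}f(n)\delta_{An}$ and applying the classical PSF for $\sum_k \delta_{kP}$, one computes
$$
\ft{\mu} \;=\; \ft{\sigma}\cdot \tfrac{1}{P}\sum_{j\in\ZZ}\delta_{j/P}
       \;=\; \tfrac{1}{P}\sum_{j\in\ZZ}\ft{f}(j\bmod N)\,\delta_{j/P},
$$
where the key point is that $\ft{\sigma}(j/P) = \sum_{n=0}^{N-1} f(n) e^{-2\pi i nj/N}$ is exactly the DFT of $f$ on $\ZZ_N$ evaluated at $j$, and hence depends only on $j\bmod N$. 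Thus $\ft{\mu}$ lives on the arithmetic progression $B\ZZ$ with $B = 1/P = 1/(10M)$, also rational, and both measures are periodic, hence translation bounded and nonzero since $f\not\equiv 0$.

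It remains to verify the vanishing on $(-M,M)$. For $\mu$: an atom at $An$ lies in $(-M,M)$ iff $|n| < M/A = 10M^2 \le N/10$, and for such $n$ the residue $n\bmod N$ lies in $I$, so $f(n\bmod N)=0$. Symmetrically for $\ft{\mu}$: an atom at $j/P = Bj$ lies in $(-M,M)$ iff $|j| < M/B = 10M^2 \le N/10$, and for such $j$ we have $j\bmod N \in I$, so $\ft{f}(j\bmod N)=0$. Thus both $\mu$ and $\ft{\mu}$ vanish on $(-M,M)$, as required. The only nontrivial step in the argument is the Fourier transform computation for the periodized measure; once the identification of $\ft{\sigma}$ at the frequencies $j/P$ with the DFT of $f$ is in hand, the quantitative size constraint $N\ge 100 M^2$ falls out of matching the two inequalities $M/A\le N/10$ and $MA\le 1/10$.
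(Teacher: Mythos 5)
Your proof is correct and follows essentially the same route as the paper: periodize the function from Lemma~\ref{lm:zn} along an arithmetic progression, identify the Fourier transform via the Poisson Summation Formula with the discrete Fourier transform of $f$, and choose $N$ of order $M^2$ so that both central gaps cover $(-M,M)$. The paper merely performs the scaling in two steps (first spacing $1$, then dilating by $1/\sqrt{N}=1/(10M)$), whereas you build in the spacing $A=1/(10M)$ from the start; the resulting measures coincide up to normalization.
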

\begin{proof}
Let us start with the function $f:\ZZ_N\to\CC$ of Lemma 1, where $N=100 M^2$.
Define first the measure
$$
\tau = \sum_{n\in\ZZ} \tau_n \delta_n,
$$
where $\tau_n = f(n \bmod N)$ for all $n\in\ZZ$.
The measure $\tau$ is $N$-periodic, so its Fourier Transform is of the form
$$
\ft{\tau} = \sum_{n \in \ZZ} \ft{\tau}_n \delta_{n/N}.
$$
It is easy to see using the Poisson Summation Formula that $\ft{\tau}_n = \ft{f}(n \bmod N)$, for $n\in\ZZ$ (we follow the normalization $\ft{f}(n) = (1/N)\sum_{k=0}^{N-1} f(k) e^{-2\pi i nk/N}$).

It follows that $\tau$ vanishes in the interval $\left(-\frac{N}{10}, \frac{N}{10}\right)$ and $\ft{\tau}$ vanishes in the interval $\left(-\frac{1}{10}, \frac{1}{10}\right)$.

Define the measure $\mu$ to be the dilate (shrinking) of $\tau$ by $1/\sqrt{N}$
$$
\mu =\sum_{n \in \ZZ} \tau_n \delta_{n/\sqrt{N}}.
$$
It follows that
$$
\ft{\mu} = 
  \sqrt{N} \sum_{n\in\ZZ} \ft{\tau}_n \delta_{n/\sqrt{N}}.
$$
and, therefore, both $\mu$ and $\ft{\mu}$ vanish in the interval $\left(-\frac{\sqrt{N}}{10}, \frac{\sqrt{N}}{10}\right)=(-M,M)$.
\end{proof}

\begin{proof}[Proof of Theorem \ref{th:main}]
Take a sequence $M_n \to \infty$ and apply repeatedly Lemma \ref{lm:comb} to obtain a sequence of periodic measures $\mu_n$ of discrete support, having also $\ft{\mu_n}$ be periodic and of discrete support and such that both $\mu_n$ and $\ft{\mu_n}$ vanish in the interval $(-M_n, M_n)$.

Denote by $T_r$ the translation by $r$ and by $M_a$ the modulation operator by $a$.
Let $\epsilon_n \to 0$ be a $\QQ$-linearly independent sequence.
Each measure $\mu_n$ or $\ft{\mu_n}$ has bounded total variation in any interval of unit length (since they are periodic), say by $V_n$.
Define $D_n = V_n n^2$.

Consider the measure 
$$
\nu = \sum_{n\ge 1} \frac{1}{D_n} M_{\epsilon_n} T_{\epsilon_n} \mu_n,
$$
whose Fourier Transform is the measure
$$
\ft{\nu} = \sum_{n \ge 1} \frac{1}{D_n} T_{\epsilon_n} M_{-\epsilon_n} \ft{\mu_n}.
$$
It follows that $\nu$ and $\ft{\nu}$ have bounded total variation in any interval of unit length.

We now show that the support of both $\nu$ and $\ft{\nu}$ is discrete. Let $J = (a,b)$ be any interval. Then there is an index $n_0$ such that for $n \ge n_0$ we have $(a,b) \subseteq (-M_n+1, M_n-1)$, therefore the support of $\nu$ or $\ft{\nu}$ in $J$ comes only from the contributions of the measures $\mu_1, \mu_2, \ldots, \mu_{n_0}$ or $\ft{\mu_1}, \ft{\mu_2}, \ldots, \ft{\mu_{n_0}}$ and consists therefore of a finite number of points. Hence both $\supp{\nu}$ and $\supp{\ft{\nu}}$ are discrete.

To show that $\supp\nu$ and $\supp\ft\nu$ are not contained in finite unions of arithmetic progressions observe that the subsets of any such union are of finite dimension over $\QQ$, whereas our sets are infinite dimensional over $\QQ$ owing to the $\QQ$-linear independence of the numbers $\epsilon_n$.
\end{proof}

\bibliographystyle{abbrv}
\bibliography{spectral-sets}

\end{document}